\newtheorem{thm}{Theorem}
\newtheorem{lm}[thm]{Lemma}
\newtheorem{pr}[thm]{Proposition}
\theoremstyle{definition}
\newcommand{\ptn}{\mathbin{\widehat{\otimes}}}
\newcommand{\GL}{\mathop{\mathrm{GL}}\nolimits}
\newcommand{\Rad}{\mathop{\mathrm{Rad}}\nolimits}
\newcommand{\CC}{\mathbb{C}}
\newcommand{\N}{\mathbb{N}}
\newcommand{\R}{\mathbb{R}}
\newcommand{\cO}{\mathcal{O}}
\newcommand*{\cR}{\mathcal R}
\renewcommand*{\Re}{\mathop{\mathrm{Re}}}
\renewcommand{\le}{\leqslant}
\renewcommand{\ge}{\geqslant}
\let \al         =\alpha
\let \te         =\theta
\let \la         =\lambda
\let \si         =\sigma
\let \up         =\upsilon
\let \om         =\omega
\let \phi         =\varphi
\title
{On density of polynomials in the algebra of holomorphic functions of exponential type\\ on a linear Lie group}
\author{O. Yu. Aristov}
\address {Institute for Advanced Study in Mathematics, Harbin Institute of Technology,  Harbin 150001, China}
\email{aristovoyu@inbox.ru}
\subjclass[2000]{22E30, 22E45}
\keywords{complex Lie group, linear group, holomorphic function of
exponential type, submultiplicative weight}
\begin{document}
\begin{abstract}
It is shown by the author in [J. Lie Theory 29:4, 1045–1070, 2019] that for every connected linear complex Lie group the algebra of polynomials (regular functions) is dense in the algebra of holomorphic functions of exponential type. However, the argument is quite involved. Here we present a short proof.
\end{abstract}

 \maketitle

 \markright{On density of polynomials}

Following \cite[\S\,5.3.1]{Ak08}  we say that a holomorphic function $f$ on a complex Lie group~$G$ is of
\emph{exponential type} if there is a  submultiplicative weight $\om$ such that $|f(g)|\le \om(g)$ for each $g \in G$. (A \emph{submultiplicative weight} is a locally bounded non-negative function such that $\om(gh)\le \om(g)\,\om(h)$ for every $g, h \in G$.) Denote the set of holomorphic functions of exponential type by $\cO_{exp}(G)$.  Endowed with the inductive topology and the point-wise multiplication $\cO_{exp}(G)$ is
a complete locally convex algebra with jointly continuous multiplication  \cite[Lemma 5.2]{ArAnF}.

Recall that a complex Lie group $G$ is said to be \emph{linear} if it admits a finite-dimensional faithful holomorphic representation. Suppose additionally that $G$ is connected. So $G$ is a connected Stein group and hence admits a canonical structure of complex algebraic affine variety \cite[Th\'{e}or\`{e}me 2]{Ma60}. Thus we can consider the algebra $\cR(G)$  of regular functions (polynomials)  on~$G$. It is well known that $G$ is a semidirect product, $B\rtimes L$, where $B$ is simply connected and solvable, and $L$ is connected and linearly complex reductive; see, e.g.,  \cite[p.\,601, Theorem 16.3.7]{HiNe}. Moreover, $L$ admits a unique structure of affine algebraic group \cite[Appendice]{Ma60} and $B$, while it may be non-algebraic, is biholomorphically equivalent to $\CC^n$ for some $n\in\N$. Hence $G$ can be identified with the affine variety $\CC^n\times L$ and $\cR(G)$ with $\cR(B)\otimes\cR(L)$, where $\cR(B)=\cR(\CC^n)$.
Although the decomposition in the form $B\rtimes L$ is not unique (see, e.g., \cite[p.\,603, Example 16.3.12]{HiNe}), it is easy to see that the structure of affine variety is independent of this decomposition and coincides with the  canonical structure given in \cite{Ma60}.

Our aim is to give a short proof of the following result.

\begin{thm}\label{regden}
\cite[Corollary 5.11]{ArAnF}
Let $G$ be a connected linear complex Lie group. Then $\cR(G)$ is contained and dense in $\cO_{exp}(G)$.
\end{thm}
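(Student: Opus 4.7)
The plan is to exploit the variety isomorphism $G\cong B\times L\cong\CC^n\times L$ from the setup by Taylor-expanding any $f\in\cO_{exp}(G)$ in the $B$-variables. The key point is that on the reductive factor one has $\cO_{exp}(L)=\cR(L)$, so the Taylor coefficients of $f$ are automatically regular functions on $L$, and the partial Taylor sums already lie in $\cR(B)\otimes\cR(L)=\cR(G)$. Density then reduces to a Cauchy-kernel estimate for the remainder. The inclusion $\cR(G)\subset\cO_{exp}(G)$ is handled first, using a faithful representation.

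\textbf{Inclusion and the reductive case.} Fix a faithful holomorphic representation $\pi\colon G\to\GL_m(\CC)$; then
\[
\om_{\pi,N}(g):=(\|\pi(g)\|+\|\pi(g)^{-1}\|)^N
\]
is submultiplicative for each $N\in\N$. Any regular function on $G$, being a polynomial in the entries of $\pi(g)$ and $\pi(g)^{-1}$, is bounded by some $\om_{\pi,N}$, which gives the inclusion. For the reductive case $\cO_{exp}(L)=\cR(L)$, $L$ is a closed affine algebraic subgroup of $\GL_m(\CC)$ and a standard comparison shows every submultiplicative weight on $L$ is dominated by some $\om_{\pi,N}|_L$. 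Hence every $f\in\cO_{exp}(L)$ has polynomial growth in the coordinates of a closed affine embedding of $L$, and is therefore regular (by a Liouville-type argument, covering $L$ by standard Zariski charts and applying multivariable Laurent expansion).

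\textbf{Taylor expansion.} For $f\in\cO_{exp}(G)$ with $|f|\le\om$, use the $\CC^n$-coordinates on $B$ to write
\[
f(b,l)=\sum_{\alpha\in\N^n}c_\alpha(l)\,b^\alpha,\qquad c_\alpha(l)=\tfrac{1}{\alpha!}\,\partial_b^\alpha f(0,l).
\]
Cauchy's inequality in $b$ combined with the submultiplicative bound $\om(b,l)\le\om(b,e)\,\om(e,l)$ yields $|c_\alpha(l)|\le C_\alpha\,\om(e,l)$, whence $c_\alpha\in\cO_{exp}(L)=\cR(L)$, and $p_N:=\sum_{|\alpha|\le N}c_\alpha(l)\,b^\alpha$ is a regular function on $G$. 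A Cauchy-kernel estimate on the polydisc of radius $R=2\|b\|_\infty$ then gives
\[
|f(b,l)-p_N(b,l)|\le C_n\,2^{-cN}\sup_{\|b'\|_\infty\le 2\|b\|_\infty}\om(b',l)
\]
for some $c>0$.

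\textbf{Principal obstacle and its resolution.} The delicate step is to dominate the right-hand side by $\epsilon\,\om^\#(b,l)$ for a single submultiplicative weight $\om^\#$ on $G$, with $\epsilon\to 0$ as $N\to\infty$. The difficulty is that in the semidirect product $G=B\rtimes L$ the naive product weights $\om_B(b)\om_L(l)$ generally fail to be submultiplicative, because the action of $L$ on $B$ can expand norms. The resolution uses the factorization $(b',l)=(b'b^{-1},e)(b,l)$ in $G$, which yields $\om(b',l)\le\om(b'b^{-1},e)\,\om(b,l)$ and hence bounds the supremum above by $M(Q(\|b\|))\,\om(b,l)$, where $M(r):=\sup_{\|\cdot\|\le r}\om(\cdot,e)$ and $Q$ is a polynomial reflecting the polynomial group law on $B\cong\CC^n$. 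Cofinality of the representation-derived weights $\om_{\pi,N'}$ among all submultiplicative weights on the linear group $G$ then produces a single $\om^\#=\om_{\pi,N'}$ absorbing $M(Q(\|b\|))\,\om(b,l)$ for $N'$ sufficiently large, yielding $\|f-p_N\|_{\om^\#}\to 0$ and density. This cofinality is where the linearity assumption on $G$ enters essentially.
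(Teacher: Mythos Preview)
There are genuine gaps. The most concrete is your cofinality claim: take $G=\CC$ with $\pi(z)=\bigl(\begin{smallmatrix}1&z\\0&1\end{smallmatrix}\bigr)$, a faithful holomorphic representation with closed range; then $\om_{\pi,N}(z)\asymp(1+|z|)^N$, whereas the word weight $\eta(z)\asymp 2^{|z|}$ is submultiplicative but is not dominated by any $\om_{\pi,N}$. So the $\om_{\pi,N}$ are \emph{not} cofinal among submultiplicative weights on a linear group, and your final absorption step fails as written. Two further problems: your inclusion argument assumes that every $f\in\cR(G)$ is a polynomial in the entries of $\pi(g),\pi(g)^{-1}$, but the affine structure on $G$ is $\CC^n\times L$ with the coordinates $z_j$ on $B$ not a priori polynomial in matrix entries---showing that the $z_j$ nonetheless lie in $\cO_{exp}(G)$ is exactly what Proposition~\ref{rclra} and Lemma~\ref{boundC} accomplish, yielding $|z_j|\le K'\eta^\al$; and your ``polynomial group law on $B\cong\CC^n$'' holds only for nilpotent $B$, whereas here $B$ is merely simply connected solvable, so multiplication and inversion in the $z$-coordinates may involve exponentials and the bound $\|b'b^{-1}\|\le Q(\|b\|)$ with $Q$ polynomial is unjustified.

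The paper avoids these difficulties by a different reduction: word-weight equivalences give $\cO_{exp}(G)\cong\cO_{\eta^\infty}(B)\ptn\cO_{exp}(L)$, which together with $\cO_{exp}(L)=\cR(L)$ reduces everything to showing that $\cR(B)=\CC[z_1,\ldots,z_n]$ is contained and dense in $\cO_{\eta^\infty}(B)$. That follows once one knows $|z_j|\le K'\eta^\al$, and this estimate is the payoff of Proposition~\ref{rclra} (existence of a closed-range representation) and Lemma~\ref{boundC} (the one-parameter analysis). Your Cauchy-remainder idea could in principle replace the tensor-product step, but only after you have the bound $|z_j|\lesssim\eta^\al$ on $B$, which is precisely the ingredient you have not supplied and which does the real work.
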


This result is a key step in proving the holomorphic reflexivity of the algebra of holomorphic functions on a connected linear Lie group; see \cite{ArHR} and the discussion on p.~1046 in \cite{ArAnF}. But the proof given in \cite{ArAnF} is based on a complete description of $\cO_{exp}(G)$, which is quite complicated. In particular, it needs the concept of exponential radical and some facts on analysis on nilpotent Lie groups contained in~\cite{ArAMN}. Here we suggest a shorter argument.

We first prove some auxiliary assertions.
\begin{pr}\label{rclra}
A connected linear complex Lie group admits a finite-dimensional faithful holomorphic representation with closed range.
\end{pr}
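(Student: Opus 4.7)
The plan is to exploit the decomposition $G=B\rtimes L$ recalled in the introduction. Since $L$ is linearly complex reductive and admits a unique compatible structure of affine algebraic group, it carries a closed faithful algebraic (hence holomorphic) representation $\sigma\colon L\to\GL(V_1)$. Composing with the projection $\pi\colon G\to L$ coming from the semidirect product yields a holomorphic representation $\sigma\circ\pi\colon G\to\GL(V_1)$ whose image equals the closed subgroup $\sigma(L)$ and whose kernel equals $B$.

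The heart of the proof is to construct a second holomorphic representation $\mu\colon G\to\GL(V_2)$ whose restriction to $B$ is a closed embedding. I would start from an arbitrary faithful holomorphic representation of $G$ (available by hypothesis) and enlarge it by adjoining two further pieces designed to eliminate the typical obstructions to closedness for images of simply connected complex solvable groups: a unipotent representation pulled back along $G\to G/[G,G]$ via the translation representation on the abelian quotient $\cong\CC^m$ contributed by $B$, together with finitely many one-dimensional holomorphic characters of $G$, tuned so as to separate the discrete central elements of $B$ that remain in the kernel after the previous additions. The linearity of $G$, combined with the simply connectedness and solvability of $B$, should guarantee that enough such characters exist.

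Given $\mu$, set $\Phi=(\sigma\circ\pi)\oplus\mu\colon G\to\GL(V_1\oplus V_2)$. This is faithful, since $\mu$ is injective on $B$ while $\sigma\circ\pi$ is injective modulo $B$. Closedness of the image follows by a direct convergence argument: if $\Phi(g_n)\to(A,M)$, closedness of $\sigma(L)$ yields $\pi(g_n)\to l\in L$ with $\sigma(l)=A$; writing $g_n=b_nl_n$ via the semidirect decomposition gives $l_n=\pi(g_n)\to l$, whence $\mu(b_n)=\mu(g_n)\mu(l_n)^{-1}\to M\mu(l)^{-1}$; closedness of $\mu|_B$ then forces $b_n\to b_\infty\in B$, and hence $g_n\to b_\infty l$, mapping under $\Phi$ to $(A,M)$.

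\textbf{Main obstacle.} The delicate step is the construction of $\mu$: a faithful holomorphic representation of a simply connected complex solvable Lie group need not have closed image (already for $\CC\to(\CC^{*})^{2}$, $t\mapsto\mathrm{diag}(e^{t},e^{\alpha t})$ with $\alpha\notin\mathbb{Q}$). Controlling the semisimple parts of the image via supplementary unipotent data, and controlling the discrete kernel arising from the complex exponential via carefully chosen characters, is the technical difficulty that the original argument in \cite{ArAnF} resolves through the machinery of the exponential radical; the hope here is that these elementary direct additions suffice.
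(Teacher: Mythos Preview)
Your plan has a genuine gap exactly where you flag it: the construction of $\mu$ with $\mu|_B$ a closed embedding is never carried out, and the mechanism you sketch for repairing closedness looks mis-aimed. You speak of adjoining characters ``to separate the discrete central elements of $B$ that remain in the kernel,'' but since the starting representation is already faithful on $G$ there is no kernel to worry about; the failure of closedness in your own example $t\mapsto(e^{t},e^{\alpha t})$ is a density-of-image phenomenon, not a kernel phenomenon, and it is unclear that finitely many additional characters would cure it. Your convergence argument at the end is fine once $\mu$ is in hand, but as written the proposal merely reduces the proposition to a statement of comparable difficulty (a closed faithful holomorphic embedding of the simply connected solvable group $B$).

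The paper sidesteps this entirely by replacing the decomposition $G=B\rtimes L$ with the abelianization $G\to G/(G,G)$. A short lemma (Lemma~\ref{qcomm}) shows that $G/(G,G)$ is an abelian Stein group, hence a finite product of copies of $\CC$ and $\CC^\times$, and therefore has an obvious faithful holomorphic representation $\si$ with closed range. One then sets $\rho=(\pi,\si)$ with $\pi$ any faithful holomorphic representation of $G$ and invokes Djokovi\'c's closure criterion \cite[Theorem~1]{Dj76} with $H=\GL(\CC,m)\times\{1\}$: it suffices that $\rho(G)H=\GL(\CC,m)\times\si(G)$ and $\rho(G)\cap H=\pi((G,G))\times\{1\}$ be closed. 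The first is immediate from the closedness of $\si(G)$; the second holds because the image of the commutator subgroup under \emph{any} faithful representation is automatically closed \cite[Proposition~2]{Dj76}. This last fact is precisely what makes $(G,G)$, rather than $B$, the right normal subgroup to quotient by, and it replaces your hard construction of $\mu$ with a one-line citation.
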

The variant of this result for real Lie groups and continuous (automatically differentiable) representations is proved in \cite[Theorem~9]{Go50}; see also \cite[Proposition 5]{Dj76} or  \cite[p.\,597, Theorem 16.2.10]{HiNe}.
In the complex case, we need a lemma, which is not straightforward in contrast to the real case.

Note that when $G$ is connected, the commutator subgroup $(G,G)$ is normal and integral; see, e.g., \cite[p.\,444, Proposition 11.2.4]{HiNe}. If, in addition, $G$ is linear, then $(G,G)$ is closed \cite[Proposition 4.37]{Le02}. So $G/(G,G)$ is a complex Lie group.
\begin{lm}\label{qcomm}
Let $G$ be a connected linear complex Lie group. Then $G/(G,G)$ admits a finite-dimensional faithful holomorphic representation with closed range.
\end{lm}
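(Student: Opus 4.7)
The strategy is to exploit the semidirect decomposition $G=B\rtimes L$ recalled in the introduction: I will show $G/(G,G)$ is isomorphic to the product of a complex affine space and an algebraic torus, each of which admits an obvious faithful closed embedding into a general linear group. The crucial technical point, which I expect to be the main obstacle, is the connectedness of $(G,G)\cap B$; without it a discrete lattice could appear in $B/((G,G)\cap B)$ and produce a compact torus factor, preventing any faithful representation from having closed range.

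To obtain the connectedness, I would establish the internal semidirect decomposition
\[
(G,G) \;=\; ((G,G)\cap B)\rtimes(L,L).
\]
The normality of $(G,G)\cap B$ in $(G,G)$ follows from the fact that both $B$ and $(G,G)$ are normal in $G$; the inclusion $(L,L)\subseteq(G,G)$ is automatic; the triviality $(L,L)\cap B=\{1\}$ comes from $L\cap B=\{1\}$; and the projection $G\to L$ sends $(G,G)$ onto $(L,L)$, yielding the product. Since $(G,G)$ is connected (being integral) and $(L,L)$ is connected (as $L$ is reductive), the displayed identity forces $(G,G)\cap B$ to be connected. I would then identify $B/((G,G)\cap B)$ via its Lie algebra. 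From $\mathfrak{g}=\mathfrak{b}\oplus\mathfrak{l}$ one gets $[\mathfrak{g},\mathfrak{g}]\cap\mathfrak{b}=[\mathfrak{b},\mathfrak{b}]+[\mathfrak{l},\mathfrak{b}]$, and complete reducibility of the (algebraic) $L$-action on $\mathfrak{b}/[\mathfrak{b},\mathfrak{b}]$ gives a decomposition $\mathfrak{b}/[\mathfrak{b},\mathfrak{b}]=V^L\oplus W$ in which $W$ is the image of $[\mathfrak{l},\mathfrak{b}]$; the quotient Lie algebra therefore equals $V^L$, abelian of some dimension $m'$. Since $B$ is simply connected and $(G,G)\cap B$ is connected, $B/((G,G)\cap B)$ is simply connected, hence isomorphic to $\CC^{m'}$.

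Finally, I would assemble the pieces. The projection $G\to L$ induces a short exact sequence
\[
1\to\CC^{m'}\to G/(G,G)\to L/(L,L)\to 1,
\]
and the composite $L\hookrightarrow G\to G/(G,G)$ factors through $L/(L,L)$ (since the target is abelian), giving a splitting. Combined with $L/(L,L)\cong(\CC^*)^k$ (the torus quotient of the reductive group $L$), this yields $G/(G,G)\cong\CC^{m'}\times(\CC^*)^k$. A faithful holomorphic representation with closed range is then the block-diagonal sum of a standard unipotent embedding $\CC^{m'}\hookrightarrow\GL(m'+1,\CC)$ and the diagonal embedding $(\CC^*)^k\hookrightarrow\GL(k,\CC)$.
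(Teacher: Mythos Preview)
Your argument is correct, but it follows a genuinely different route from the paper's. The paper does not touch the decomposition $G=B\rtimes L$ at all in this lemma; instead it quotes three external results in succession: Lee's theorem that $G/\Rad(G,G)$ is linear (hence Stein), Matsushima's lemma that passing further to $G/(G,G)$ preserves the Stein property, and the Matsushima--Morimoto structure theorem that every connected abelian Stein group is a product of finitely many copies of $\CC$ and $\CC^\times$. From there the representation is built exactly as you do in your last step.

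Your approach trades these black boxes for a direct structural computation. The key step---showing $(G,G)\cap B$ is connected via the internal semidirect decomposition $(G,G)=((G,G)\cap B)\rtimes(L,L)$---is sound: the multiplication map $B\times L\to G$ is a diffeomorphism, so its restriction identifies $(G,G)$ topologically with the product $((G,G)\cap B)\times(L,L)$, and connectedness of the total space forces connectedness of each factor. The homotopy exact sequence then gives simple connectedness of $B/((G,G)\cap B)$, and the splitting of the resulting abelian extension is immediate. One minor remark: the Lie-algebra computation with complete reducibility is correct but unnecessary, since $B/((G,G)\cap B)$ embeds in the abelian group $G/(G,G)$ and is therefore automatically abelian; you only need that it is simply connected and complex to conclude it is some $\CC^{m'}$. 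What your approach buys is self-containment (only standard Lie theory and the reductivity of $L$), at the price of length; the paper's version is three lines but leans on the Stein-group literature.
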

\begin{proof}
By \cite[Theorem 4.38(iii)]{Le02}, the group $G/\Rad(G,G)$ is linear and hence a Stein group. Then \cite[Lemme~8]{Ma60} implies that $G/(G,G)$ is also a Stein group. Being an abelian Stein group, $G/(G,G)$ is a product of finitely many copies of $\CC$ and $\CC^\times$ (the group of units) \cite[p.\,141, Proposition~4]{MM60}; see also \cite[p.\,190, Theorem XIII.5.9]{Nee}.  Note  that the representation of $\CC$  given by $z\mapsto\begin{pmatrix}
 1& z\\
  0 & 1
\end{pmatrix}$  is faithful and holomorphic, and has closed range. Also, the tautological representation of $\CC^\times$ evidently has closed range.  Therefore $G/(G,G)$ also admits a finite-dimensional faithful holomorphic representation with closed range.
\end{proof}
For technical reasons, in what follows we sometimes use homomorphisms to the general linear group $\GL(\CC, m)$ instead of $m$-dimensional representations.
\begin{proof}[Proof of Proposition~\ref{rclra}]
We argue as in the proof of \cite[Proposition 5]{Dj76} but with the use of Lemma~\ref{qcomm}. The idea is to take two representations, the first being faithful and the second having closed range, with additional assumptions to ensure that their sum satisfies both properties.

Let $\pi\!:\CC\to \GL(\CC, m)$ be a faithful holomorphic homomorphism. By Lemma~\ref{qcomm}, there is a faithful holomorphic homomorphism $G/(G,G)\to \GL(\CC, n)$ with closed range. Denote the composition of  the quotient map $G\to G/(G,G)$ with this homomorphism by $\si$. Put
$$
\rho\!:G\to\GL(\CC, m)\times\GL(\CC, n),\,g\mapsto (\pi(g),\si(g)).
$$
It is clear that $\rho$ is faithful and holomorphic. To show that the range of $\rho$ is closed we apply  \cite[Theorem~1]{Dj76}. (Instead of the last result, which concerns real Lie groups,  a generalization concerning general locally compact groups \cite[Theorem]{Wu90} can be applied.) Put $H\!:=\GL(\CC, m)\times \{1\}$. Then by \cite[Theorem~1]{Dj76}, it suffices to show that  $\rho(G)H$  and  $\rho(G)\cap H$ are closed, and that $\rho(G)$ normalizes $H$.

It is clear that $\rho(G)H=\GL(\CC, m)\times \si(G)$. This set is closed because $\si(G)$ is closed.
Also, $\rho(G)\cap H=\pi((G,G))\times \{1\}$. Treating $\GL(\CC, m)$ as a subgroup of $\GL(\R,2m)$, we can apply \cite[Proposition~2]{Dj76}, which then implies that $\pi((G,G))$ is closed in $\GL(\CC, m)$. So $\rho(G)\cap H$ is closed.
The fact that $\rho(G)$ normalizes $H$ is trivial. Thus $\rho(G)$ is closed.
\end{proof}

Let $\|\cdot\|$ denote the Hilbert space operator norm on the algebra of complex $m\times m$ matrices. Put
\begin{equation}\label{omdef}
\om(a)\!:=\max\{\|a\|,\, \|a^{-1}\|\}\qquad (a\in \GL(\CC, m)).
\end{equation}

\begin{lm}\label{boundC}
Let $m\in\N$ and $\pi\!:\CC\to \GL(\CC, m)$ be a faithful holomorphic homomorphism with closed range.  Then there is $C>0$ such that $|z|\le C\,\om(\pi(z))$ for every $z\in\CC$.
\end{lm}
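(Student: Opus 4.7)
The plan is to write $\pi(z)=\exp(zA)$ with $A\in M(m,\CC)$ nonzero (possible by simple connectedness of $\CC$ and faithfulness of $\pi$), and extract a lower bound on $\om(\pi(z))$ from the Jordan structure of $A$. The closed-range hypothesis together with injectivity makes $\pi\colon\CC\to\pi(\CC)$ a topological isomorphism, since a continuous bijective homomorphism between Polish groups is open. Moreover, the sublevel set $\{a\in\GL(\CC,m):\om(a)\le M\}$ is compact, because along a convergent sequence in it $|\det|$ stays bounded away from $0$, so the limit is invertible with inverse of norm $\le M$. Consequently $\om\circ\pi$ must be unbounded on every ray through $0$ in $\CC$: otherwise the image of the ray would be closed and relatively compact, hence compact, forcing the ray itself to be compact via the continuous inverse of~$\pi$.

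After choosing a basis putting $A$ into Jordan form, a block-by-block computation shows that for any Jordan block of size $k$ with eigenvalue $\la$, the $(1,k)$-entry of $\exp(zA)$ in that block equals $e^{z\la}z^{k-1}/(k-1)!$, with the analogous formula for $\exp(-zA)$. Up to a similarity constant depending only on~$A$,
$$
\om(\pi(z))\ge c\,|z|^{k-1}\,e^{|\Re(z\la)|}.
$$
If $A$ has a Jordan block of size $k\ge2$, this already yields $\om(\pi(z))\ge c|z|$ for $|z|\ge1$; combining with the trivial bound $\om\ge1$ for $|z|\le1$ gives $|z|\le C\,\om(\pi(z))$ for all~$z$.

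The remaining case is that $A$ is diagonalizable with eigenvalues $\la_1,\dots,\la_m$, when the exponent $k-1$ in the previous bound vanishes. Assume all $\la_i$ lie on a single real line through~$0$, say $\la_i=\mu r_i$ with $\mu\in\CC^\times$ and $r_i\in\R$. On the direction $z=is/\mu$ ($s\in\R$) one has $z\la_i=isr_i\in i\R$, so $\pi(z)$ has only unimodular eigenvalues and, being diagonalizable, is a bounded one-parameter family; this contradicts the unboundedness on every ray established in the first paragraph. Hence the $\la_i$ are not collinear through~$0$, so the continuous function $\phi\mapsto\max_i|\Re(e^{i\phi}\la_i)|$ attains a strictly positive minimum $c'>0$ on the compact circle. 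The spectral-radius lower bound $\om(\pi(z))\ge\max_i e^{|\Re(z\la_i)|}$ then yields $\om(\pi(z))\ge e^{c'|z|}$, which dominates $|z|$ for $|z|$ large. The main difficulty is precisely this diagonalizable case, where the closed-range hypothesis is essential to rule out configurations of eigenvalues on a single real line through the origin.
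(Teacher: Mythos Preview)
Your proof is correct and follows essentially the same route as the paper: write $\pi(z)=\exp(zA)$, pass to Jordan form, and split into the non-diagonal case (where an off-diagonal entry of a block gives the linear lower bound directly) and the diagonalizable case (where non-collinearity of the eigenvalues gives a genuine norm on $\CC$ via $\max_i|\Re(\la_i z)|$, and collinearity is excluded using the closed-range hypothesis). The only notable difference is in how the collinear-eigenvalue configuration is ruled out: the paper restricts $\pi$ to the line $x\mapsto ix\bar\la$, observes the image lands in a torus, and invokes a Baire-category argument to contradict non-compactness of $\R$; you instead first establish abstractly that $\om\circ\pi$ is unbounded on every ray (via compactness of the sublevel sets $\{\om\le M\}$ and the open mapping theorem for Polish groups), and then observe that on the critical ray the diagonalized exponential is uniformly bounded---the same contradiction, packaged slightly differently.
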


\begin{proof}
Since $\pi$ is holomorphic, it is not hard to see that there is a generator, i.e., a matrix~$a$ such that $\pi(z)=\exp(za)$ for every $z\in\CC$. We can assume that $a$ is in a Jordan normal form. The following three cases may occur.

(1)~Suppose that $a$ is diagonal and all the eigenvalues are collinear over $\R$. This means that there is $\la\in\CC$ such that the eigenvalues are  $t_1\la,\ldots,t_m\la$
for some $t_1,\ldots,t_m\in\R$.

Let $\te\!:\R\to \CC\!:x\mapsto ix\bar{\la}$. Since the eigenvalues of $\exp(ix\bar{\la}a)$ have modulus $1$ for every $x\in\R$, we can treat $\pi\circ\te$ as a continuous homomorphism from $\R$ to $\mathbb{T}^m$. Since the range of $\pi$ is closed and $\te$ is topologically injective, the range of $\pi\circ\te$ is also closed. Moreover, it is compact because $\mathbb{T}^m$ is compact. So we have a continuous bijective homomorphism from a $\si$-compact group to a compact group.
It is easy to see from the Baire category theorem that every such homomorphism is a topological isomorphism; see, e.g., \cite[Lemma]{Wu90}. Since $\R$ is not compact, we get a contradiction and thus this case is impossible.

(2)~Suppose that $a$ is diagonal and there are two eigenvalues $\la_1$ and $\la_2$ that are not collinear over $\R$.
Then
$$
\om(\pi(z))\ge \exp (\max\{|\Re (\la_1 z)|,\, |\Re (\la_2 z)|\})\ge \max\{|\Re (\la_1 z)|,\, |\Re (\la_2 z)|\}
$$
for every $z\in \CC$. Since $\la_1$ and $\la_2$ that are not collinear, the formula on the right-hand side defines a norm on $\CC$ as a vector space over $\R$. Since all such norms are equivalent, there is $C>0$ such that $|z|\le C\om(\pi(z))$ for every $z\in\CC$ and this completes the proof of
the assertion of the lemma in this case.

(3)~Suppose that $a$ is not diagonal. Taking a suitable Jordan block, we can assume without loss of generality that  the upper-left corner of $a$ is
$\begin{pmatrix}
 \la& 1 \\
  0 & \la
 \end{pmatrix}$  with some $\la\in\CC$.  Then the upper-left corner of $\exp(za)$ has the form $\exp(z\la)\begin{pmatrix}
 1& z\\
  0 & 1
 \end{pmatrix}$. Considering the upper-right entry we have
$$
 \om(\pi(z))\ge \max\{|z\exp(\la z)|,\, |z\exp(-\la z)|\}= |z|\,\exp|\Re(\la z)|\ge|z|
$$
for every $z\in \CC$. Thus the assertion of  the lemma holds also in this case.
\end{proof}

To prove Theorem~\ref{regden} we need the following notation and terminology. Let $G$ be a complex Lie group and $\up\!:G\to (0,+\infty)$ a locally bounded function. Then we consider the Banach space
\begin{equation} \label{cOom}
\cO_\up(G)\!:=\Bigl\{\text{$f$ is holomorphic on $G$ and } |f|_\up\!:=\sup_{g\in
G}{\up(g)}^{-1}{|f(g)|}<\infty\Bigr\}.
\end{equation}
Put also $\cO_{\up^\infty}(G)\!:=\bigcup_{n\in\N}\cO_{\up^n}(M)$ and endow it with the inductive topology.

We say that a function $\eta_1$ on~$G$ is \emph{dominated} by a function $\eta_2$ if there are $K,\al>0$ such that
$$
\eta_1(g)\le K\,\eta_2(g)^\al\quad\text{for all $g\in G$.}
$$
If, in addition, $\eta_2$ is dominated by $\eta_1$, then $\eta_1$ and $\eta_2$ is said to be \emph{equivalent}.

Recall also that if $G$ is compactly generated, i.e.,  generated by a relatively compact neighbourhood~$U$ of the identity, then
\begin{equation}\label{wordlen}
\eta(g)\!: =\min \{2^n \!: \, g \in U^{n}\}
\end{equation}
(where $U^0=\{1\}$) defines a submultiplicative weight, which is called a \emph{word weight}.
If, in addition, $U^{-1}=U$, then $\eta$ is \emph{symmetric}, i.e., $\eta(g^{-1})=\eta(g)$ for every~$g$.

\begin{proof}[Proof of Theorem~\ref{regden}]
Fix a decomposition $G=B\rtimes L$ as above.
Let $\eta$ and $\eta_1$  be symmetric word weights on $G$ and $L$, respectively. The restriction of $\eta$ to~$B$ is also denoted by~$\eta$. Since $G$ is a semidirect product, it follows from  \cite[Proposition 4.2]{ArAnF} that the functions $(b,l)\to \eta(bl)$ and $(b,l)\to\eta(b)\eta_1(l)$ are equivalent on $B\times L$. Then by  \cite[Proposition 5.5(B)]{ArAnF}, we have $\cO_{\eta^\infty}(G)\cong  \cO_{\eta^\infty}(B)\ptn \cO_{\eta_1^\infty}(L)$, where $\ptn$ denotes the complete projective tensor product. Moreover, since $\eta$ and $\eta_1$  are symmetric word weights, it follows from  \cite[Theorem 5.3]{Ak08} that $\cO_{exp}(G)=\cO_{\eta^\infty}(G)$  and $\cO_{exp}(L)=\cO_{\eta_1^\infty}(L)$ as locally convex algebras. Therefore $\cO_{exp}(G)\cong  \cO_{\eta^\infty}(B)\ptn \cO_{exp}(L)$.

Since  $L$ is connected and linearly complex reductive, $\cO_{exp}(L) = \cR(L)$  \cite[Theorem 5.9]{ArAnF}. Since $\cO_{\eta^\infty}(B)\otimes \cO_{exp}(L)$ is dense in $\cO_{\eta^\infty}(B)\ptn \cO_{exp}(L)$ and $\cR(G)=\cR(B)\otimes\cR(L)$, to complete the proof it suffices to show that $\cR(B)$ is contained and dense in $\cO_{\eta^\infty}(B)$.

Proposition~\ref{rclra} implies that there are  $m\in\N$ and a faithful holomorphic homomorphism $\pi\!:G\to\GL(\CC, m)$ with closed range. Since $B$ is simply connected and solvable, it can be represented as an iterated semidirect product  of subgroups $F_1,\ldots, F_n$ each of which is isomorphic to $\CC$. For every $j$ fix an isomorphism $\CC\to F_j$ and denote by $\rho_j$ its  composition with the embedding $F_j\to G$. Then $\pi\circ\rho_j\!:\CC\to\GL(\CC, m)$ is also a faithful holomorphic homomorphism with closed range.

Define $\om$ as in~\eqref{omdef}.  It is well known that every submultiplicative weight is dominated by every word weight; see, e.g., \cite[Theorem 5.3]{Ak08}. Since $\om\circ\pi$ is a submultiplicative weight on $G$, this means
that there are $K>0$ and $\al>0$ such that $\om(\pi(g))\le K\eta(g)^\al$ for all $g\in G$.
Applying Lemma~\ref{boundC}, we conclude that there is $K'>0$  such that
\begin{equation}\label{esteta}
|z| \le K'\eta(\rho_j(z))^\al\qquad \text{for all $j=1,\ldots,n$ and $z\in\CC$}.
\end{equation}

Since $G$ is biholomorphically equivalent to $ F_1\times \cdots F_n\times L$, every element $g$ of $G$ can be identified with $(z_1,\ldots ,z_n,l)$, where $z_j\in \CC$ and  $l\in L$. It follows from \eqref{esteta} that  the function $g\mapsto z_j$ is in $\cO_{\eta^\infty}(B)$ for every $j$ and so the whole $\cR(B)$  (which can be identified with $\CC[z_1,\ldots,z_n]$) is contained in $\cO_{\eta^\infty}(B)$.

Finally, note that every  $f\in\cO_{\eta^\infty}(B)$ is an entire function in the variables $z_1,\ldots,z_n$. Using the bound in Lemma~\ref{boundC},  we conclude that $f$ can be approximated in the topology of $\cO_{\eta^\infty}(B)$ by partial sums of its Taylor series. Thus $\cR(B)$ is dense in $\cO_{\eta^\infty}(B)$.
\end{proof}

\end{document}